\documentclass[12pt]{article}
\usepackage[margin=1in]{geometry}
\usepackage{tikz}
\usepackage{amsmath,amssymb,amsthm}
\usepackage[hidelinks]{hyperref}
\usepackage{xcolor}
\usepackage{float}
\usepackage{amssymb}

\newtheorem{theorem}{Theorem}[section]
\newtheorem{proposition}{Proposition}[section]

\newtheorem{example}[theorem]{Example}

\tikzset{>=latex}
\tikzset{->-/.style={decoration={
  markings,
  mark=at position .5 with {\arrow{>}}},postaction={decorate}}}

\theoremstyle{definition}

\newtheorem{definition}{Definition}[section]

\title{Sch\"{u}tte's property for sets of tournaments and an application to dice games}
\author{Joel Jeffries \footnote{Iowa State University, Ames, IA 50011, USA, \texttt{\{joeljef}@iastate.edu\}}}
\date{\empty}

\begin{document}

\maketitle

\begin{abstract}
A tournament has Sch\"{u}tte's property $S_k$ if for every set of $k$ vertices, there is a vertex which dominates the set. In 1963, Erd\H{o}s provided bounds for $f(k)$, the smallest order of an $S_k$ tournament. Sch\"{u}tte's property has various applications, including the design of unfair dice games. A set of dice introduced by James Grime motivates a generalization of Sch\"{u}tte's property to sets of tournaments: a set of tournaments on the same vertex set has property $S_k$ if for every set of $k$ vertices, there is a vertex which dominates the set in at least one of the tournaments. 

We explore this generalization and provide bounds on the fewest number of vertices needed to have an $S_k$ set of $m$ tournaments. We then apply these results to introduce a few new sets of dice similar to Grime's dice that can be used to play a game that gives one player an advantage.
\end{abstract}

\section{Introduction}

A tournament is a directed graph obtained by directing each edge of a complete graph. In a directed graph, a vertex $v$ \emph{dominates} a set of vertices if $v$ is directed towards every vertex in the set. In a 1963, Erd\H{o}s \cite{erdos} introduced Sch\"{u}tte's property for tournaments.

\begin{definition}[Sch\"{u}tte's property]
A tournament $T$ has property $S_k$ if, for every set of $k$ vertices, there is a vertex $v \in V(T)$ that dominates the set. Define $f(k)$ to be the smallest order of an $S_k$ tournament.
\end{definition}

\begin{figure}[htp!] \centering 
\includegraphics[scale=1]{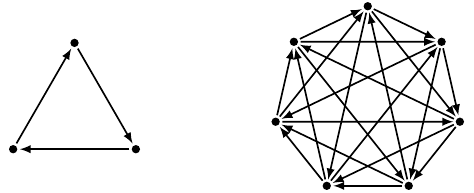}
\caption{An example of a $3$-vertex tournament that is $S_1$ and a $7$-vertex tournament that is $S_2$.}
\label{fig:SkTournaments}
\end{figure} 

In his article, Erd\H os proved the existence of $S_k$ tournaments for all $k$ using the probabilistic method and provided bounds for $f(k)$. In 1965, Szekeres and Szekeres improved the lower bound on $f(k)$ \cite{2Szekeres}.

\begin{theorem}[Erd\H os (1963), Szekeres and Szekeres (1965)]\label{thrm:Erdos}

\begin{enumerate}
    \item[]
    \item $f(k)\leq\min\left\{ n : 2^k \binom{n}{k} (1-2^{-k})^{n-k} < 1 \right\} \leq (\log(2) + O(1)) k^2 2^k$ for large $k$  (Erd\H os)
    \item $f(k) \geq 2^{k+1} - 1$ (Erd\H os)
    \item $f(k) \geq (k+2)2^{k-1} - 1$ for $k > 2$ (Szekeres and Szekeres)
\end{enumerate}
\end{theorem}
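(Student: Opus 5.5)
We treat the three parts separately. Part 1 uses the probabilistic method, Part 2 a short induction, and Part 3 a refinement of that induction.

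\textbf{Upper bound (part 1).} Take a uniformly random tournament on $n$ vertices. Fix a $k$-set $K$. Each vertex outside $K$ dominates $K$ with probability $2^{-k}$, and these events are independent because they involve disjoint sets of edges. So the probability that no vertex dominates $K$ is $(1-2^{-k})^{n-k}$. A union bound over the $\binom{n}{k}$ choices of $K$ shows that if
\[
\binom{n}{k}(1-2^{-k})^{n-k}<1,
\]
then some tournament is $S_k$. The stated condition, with the extra factor $2^k$, is stronger, so it also suffices. For the asymptotics, set $n=ck^2 2^k$. Then $\binom{n}{k}\le n^k$, and
\[
(1-2^{-k})^{n-k}\le \exp\bigl(-(n-k)2^{-k}\bigr).
\]
The logarithm of the left side is therefore at most
\[
k\ln n - n2^{-k} + O(1) \approx k\bigl(2\ln k + k\ln 2\bigr) - ck^2 .
\]
This is negative once $c>\ln 2+o(1)$, which gives $n=(\ln 2+o(1))k^22^k$.

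\textbf{Erd\H{o}s lower bound (part 2).} We show $f(k)\ge 2f(k-1)+1$ with $f(1)=3$; solving the recursion gives $f(k)\ge 2^{k+1}-1$. Take an $S_k$ tournament $T$ and any vertex $v$, and let $I(v)$ be the set of in-neighbours of $v$. For any $(k-1)$-set $A\subseteq I(v)$, some vertex dominates $A\cup\{v\}$, and that vertex must lie in $I(v)$. Hence $I(v)$ induces an $S_{k-1}$ tournament, so every vertex has in-degree at least $f(k-1)$. Summing in-degrees gives
\[
\binom{n}{2}\ge n f(k-1), \qquad\text{so}\qquad n\ge 2f(k-1)+1 .
\]
For the base case, $S_1$ means every vertex has positive in-degree, which forces $n\ge 3$.

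\textbf{Szekeres bound (part 3).} This is the main obstacle. We would again aim for a recursion, roughly $f(k)\ge 2f(k-1)+2^{k-1}$ or an equivalent, giving $(k+2)2^{k-1}-1$. The idea is to sharpen the in-degree argument by exploiting the structure inside $I(v)$: not only must $I(v)$ be $S_{k-1}$, but for each vertex $u\in I(v)$, the common in-neighbourhood $I(u)\cap I(v)$ must be $S_{k-2}$. Averaging over pairs $(u,v)$ should then force in-degrees about $2^{k-1}$ above the naive bound. We would first try to prove, by induction, the stronger claim that every vertex of an $S_k$ tournament has in-degree at least $(k+1)2^{k-2}-1$. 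This claim suffices: plugging it into the degree sum gives
\[
n\ge 2\bigl((k+1)2^{k-2}-1\bigr)+1=(k+1)2^{k-1}-1,
\]
which is off by $2^{k-1}$ from the target. So a counting of dominating vertices over pairs, rather than over single vertices, would be needed to close the gap. We expect the details of this step to require the case $k>2$ specifically.
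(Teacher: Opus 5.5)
The paper does not prove this theorem; it quotes it from Erdős and from Szekeres--Szekeres, so your proposal has to stand on its own.

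Parts 1 and 2 are Erdős's standard arguments and are fine, up to two small repairs.

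In Part 2, the step ``$I(v)$ is $S_{k-1}$, hence $|I(v)|\ge f(k-1)$'' needs the $S_{k-1}$ property to be non-vacuous. This is easy to secure: if $|I(v)|\le k-1$, any $k$-set containing $I(v)\cup\{v\}$ would need a dominator lying inside $I(v)$, which is impossible. So $|I(v)|\ge k$.

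In Part 1, bounding the minimum exactly as stated means carrying the factor $2^k$. That adds $k\ln 2$ to the logarithm, not $O(1)$. This is $o(k^2)$, so it is harmless.

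The genuine gap is Part 3. You give no argument, and the route you sketch cannot produce the missing $2^{k-1}$.

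\begin{itemize}
\item Your ``stronger claim'' is just the bound for $k-1$ applied to the $S_{k-1}$ tournament $I(v)$. As you note yourself, the degree sum then yields only $(k+1)2^{k-1}-1$.
\item Averaging the pair condition adds nothing either. Summing $|I(u)\cap I(v)|\ge f(k-2)$ over all pairs gives $\sum_w\binom{d^+(w)}{2}\ge f(k-2)\binom{n}{2}$. But by convexity the left side is at least $n(n-1)(n-3)/8$ in every $n$-vertex tournament. That is at least the right side as soon as $n\ge 4f(k-2)+3$, which Part 2 already guarantees. So this count is satisfied automatically, for instance by a regular tournament of order $2f(k-1)+1$, and it cannot exclude anything the Erdős recursion allows.
\item Your target recursion is also off by one. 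The recursion that reproduces the formula is $g(k)=2g(k-1)+2^{k-1}+1$. Your version $f(k)\ge 2f(k-1)+2^{k-1}$ would only give $(k+2)2^{k-1}-2$.
\item Any such recursion is false at $k=2$, since $f(1)=3$ and $f(2)=7$. So the mechanism must start from $f(2)=7$ and use $k\ge 3$ in an essential way.
\end{itemize}

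What is missing is a strengthened hypothesis that captures more than in-neighbourhood sizes and pairwise overlaps. As the paper's introduction indicates, Szekeres and Szekeres work with tournaments in which every $k$-set is dominated by at least $m$ vertices, and they run the argument in that two-parameter family. Nothing in your sketch plays this role, so Part 3 remains unproved.
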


The bounds in Theorem \ref{thrm:Erdos} remain the best known bounds for $f(k)$, but there have been several variations on Sch\"{u}tte's property studied over the years. In order to improve the lower bound on $f(k)$, Szekeres and Szekeres considered tournaments where every $k$ set of vertices is dominated by at least $m$ other vertices  \cite{2Szekeres}. They conjectured that their lower bound is the true value for $f(k)$. Borowiecki, Haluszczak, and Tuza explored a bipartite variation of Sch\"{u}tte's property motivated by geometric designs \cite{bipartiteSchutte}. Sch\"{u}tte's property and the bounds for $f(k)$ have some direct ties to other problems about domination in digraphs(\cite{boundeddomination, dominationirredundance}).
While Erd\H os proved the existence of tournaments with property $S_k$ probabilistically, there has been some effort to provide constructions for $S_k$ tournaments (\cite{graham_spencer_construction}, \cite{tyszkiewiczsimple}). However, these constructions result in tournaments that grow faster in size than the asymptotics given by the probabilistic construction of Erd\H os. 

Some values of $f(k)$ are known for small $k$, all achieved by Paley tournaments. The Paley tournament $P_p$ is the tournament on a prime $p$ number of vertices where  $(i,j)$ is an edge if $j - i$ is a quadratic residue modulo $p$. The tournaments $P_3$, $P_7$, and $P_{19}$ are $S_1$, $S_2$, and $S_3$ respectively, each of these having a number of vertices equal to the lower bound given for $f(k)$ in Theorem \ref{thrm:Erdos}. According to Boz\' oki, Fisher showed computationally that $P_{67}$, $P_{331}$, and $P_{1163}$ are the smallest $S_4$, $S_5$, and $S_6$ Paley tournaments respectively \cite{bozoki}.

Tournaments with Sch\"{u}tte's property have applications across varying fields, from voting theory to teaching theory (\cite{voting},\cite{teaching}). 
These tournaments are particularly well suited for the design of unfair games. For example, consider the $7$ dice created by Oskar van Deventer shown in Figure \ref{fig:oskarDice}. Each arrow between two dice in this diagram shows which die is expected to win if the two are rolled against each other, all with probability $\frac{5}{9}$. The tournament generated by these these winning relations is $P_7$, the smallest possible $S_2$ tournament. These dice can be used to play the following 3-player game: two players each select a die and a third player selects a die in response. The first two players each roll off against the last many times. Player 3 wins if they can roll higher than each opponent more often than not.

\begin{figure}[htp!] \centering 
\includegraphics[scale=0.9]{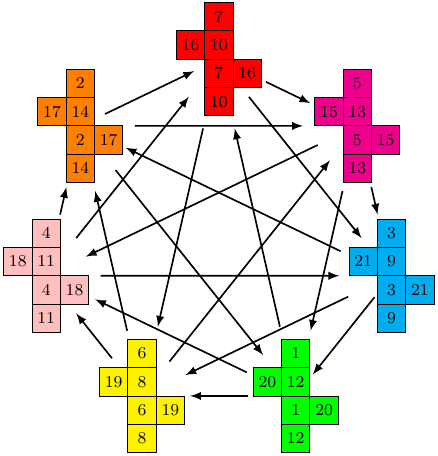}
\caption{Oskar's dice. Each arrow represent a win with probability $\frac{5}{9}$.}
\label{fig:oskarDice}
\end{figure}

Since the tournament formed by Oskar's dice is $S_2$, this game favors player 3. No matter which two dice the first two players choose, player 3 can select a die that is expected to beat each with probability $\frac{5}{9}$. Furthermore, since $f(2) = 7$, there is no $S_2$ tournament on fewer than $7$ vertices, so there is no set of fewer than $7$ dice that has this property of favoring player 3. However, while Oskar's dice have the minimum number of dice needed to play this particular game, a set of $5$ dice created by James Grime can be used to play a similar game. These dice are shown in Figure \ref{fig:grimeDice}. 
In this diagram, each arrow in the left image shows which die is expected to roll higher when the dice are rolled once and compared. Each arrow in the right image shows which die is expected to roll higher when the dice are rolled twice and the summed results are compared. Note that this set of dice is the second set created by Grime, the first set nearly realizing these tournaments, but with smaller numbers on the faces \cite{grime}.

\begin{figure}[htp!] \centering 
\includegraphics[scale=0.9]{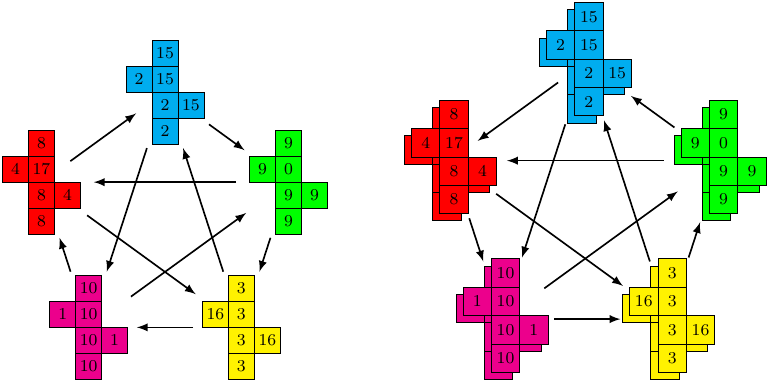}
\caption{Grime's dice.}
\label{fig:grimeDice}
\end{figure}

These dice can be used to play the following 3-player game: two players each select a die. In response, the third player selects a die \emph{and} decides if the game will be played by rolling once or twice. The first two players each roll off against the last many times. Player 3 wins if they can roll higher than each opponent more often than not.
This game again favors player 3, who can always pick a die to be able to beat any two other dice, either by rolling once or by rolling twice (check a few pairs of dice and find a die that is expected to beat them in Figure \ref{fig:grimeDice} to convince yourself).

The two tournaments formed by Grime's dice have a similar property to Sch\"{u}tte's property $S_2$: given any two vertices, there exists a third that dominates them in one tournament \emph{or} the other. However, neither tournament by itself is $S_2$.
This property naturally leads to an extension of Sch\"{u}tte's property to sets of tournaments. In this paper, we formalize this property and discuss bounds on the orders of tournaments that can have this property. Then, we apply these results to construct sets of dice that can be used to play a game similar to the one played with Grime's dice.

\section{Sch\"{u}tte's property for sets of tournaments}\label{sec:modifiedSchuttes}

Throughout this section, we will be considering sets of labeled tournaments on a fixed vertex set. The following definition is a generalization of Sch\"{u}tte's property to  a set of tournaments. If a vertex $v$ dominates a set of vertices $U$, we say $v \rightarrow U$.

\begin{definition}[Sch\"{u}tte's property for sets of tournaments]
Let $\tau = \{T_1, \dotsc , T_m\}$ be a set of $m$ labeled tournaments each on the same vertex set $V(\tau)$. The set $\tau$ has Sch\"{u}tte's property $S_k$ if for every set of  $k$ vertices $U \subset V(\tau)$, there is a tournament $T_i \in \tau$ and a vertex $u \in V(\tau)$ such that $u \rightarrow U$ in $T_i$.
\end{definition}

For brevity, we will call an $S_k$ set of $m$ tournaments on a vertex set of size $n$ an $S_k$ $m$-set of order $n$. For example, the tournaments realized by Grime's dice in Figure \ref{fig:grimeDice} is an $S_2$ $2$-set of order $5$. Since $S_k$ tournaments exist for every $k$ and form a set of size $1$, the first bullet of the following proposition implies the existence of $S_k$ $m$-sets for every $k$ and $m$. 

\begin{proposition}\label{prop:simpleResult}
If there exists an $S_k$ $m$-set of order $n$, then 
\begin{itemize}
    \item there exists an $S_k$ $m'$-set of order $n$ for all $m' \geq m$.
    \item there exists an $S_{k'}$ $m$-set of order $n$ for all $|V(\tau)|-1 \leq k' \leq k$.
\end{itemize}
\end{proposition}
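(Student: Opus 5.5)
Both parts follow directly from the definition: adding tournaments to $\tau$ can only create more dominating pairs $(T_i,u)$, and a vertex that dominates a set also dominates each of its subsets. So I would prove them separately by monotonicity, with no appeal to Theorem~\ref{thrm:Erdos}.

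\textbf{First bullet.} Let $\tau=\{T_1,\dots,T_m\}$ be an $S_k$ $m$-set on $V(\tau)$ with $|V(\tau)|=n$. I would form $\tau'$ by adding $m'-m$ further labeled tournaments on the same vertex set, chosen arbitrarily. Take any $k$-set $U\subset V(\tau)$. The $S_k$ property of $\tau$ gives some $T_i\in\tau\subseteq\tau'$ and a vertex $u$ with $u\rightarrow U$ in $T_i$, so the same witness works for $\tau'$.

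The one point needing care is whether ``set'' requires the tournaments to be distinct. If it does, the new tournaments must be distinct labeled tournaments on $n$ vertices, and there are $2^{\binom{n}{2}}$ of these. The statement then holds for every $m'$ up to $2^{\binom{n}{2}}$, and I would note this range explicitly. If repetition is allowed, one can simply repeat $T_1$.

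\textbf{Second bullet.} I read the range as $1\le k'\le k$; the written lower bound $|V(\tau)|-1$ looks like a typo. The key preliminary is that $k\le n-1$. Indeed, a $k$-set $U$ must be dominated by some vertex $u$, and since a tournament has no loops, $u\notin U$; hence $n\ge k+1$.

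Now take any $k'$-set $U'$ with $k'\le k$. Because $k\le n-1<n$, I can enlarge $U'$ to a $k$-set $U\supseteq U'$. Applying the $S_k$ property to $U$ gives $T_i\in\tau$ and $u$ with $u\rightarrow U$ in $T_i$. Then $u\rightarrow U'$ in $T_i$ as well, so the same $\tau$ is an $S_{k'}$ $m$-set of order $n$.

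There is no real obstacle here. The only subtle points are the distinctness convention in the first bullet and the check $k\le n-1$, which guarantees that the extension of $U'$ to a $k$-set exists.
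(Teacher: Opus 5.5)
Your proof is correct and follows the paper's argument. For the first bullet you pad $\tau$ with $m'-m$ arbitrary tournaments on $V(\tau)$; for the second you enlarge a $k'$-set to a $k$-set and reuse the vertex that dominates it. Your side remarks address points the paper's proof leaves implicit: the lower bound on $k'$ is a typo, repeated tournaments must be allowed, and $k\le n-1$ must hold, which strictly needs a $k$-set to exist and so relies on the paper's convention that order $n\le k$ is never $S_k$.
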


\begin{proof}
Let $\tau = \{T_1, \dotsc , T_m\}$ be an $S_k$ $m$-set of order $n$ and let $V = V(\tau)$. 

Let $m' \geq m$. Form $\tau'$ by adding $m'-m$ arbitrary tournaments on $V$ to $\tau$. Then if $A$ is a $k$ subset of $V$, then since $\tau$ is $S_k$, there exist $T_i \in \tau$ and $u \in V$ such that $u \rightarrow A$ in $T_i$. But $T_i \in \tau'$, so $\tau'$ is an $S_k$ $m'$ set.

Now let $|V(\tau)|-1 \leq k' \leq k$. Let $B$ be a $k'$ subset of $V$. 
Form $B'$ by adding $k - k'$ arbitrary vertices of $V$ to $B$. Then $B'$ is a $k$ subset of $V$, so there exist $T_i \in \tau$ and $u \in V$ such that $u\rightarrow B'$. 
But $B \subseteq B'$, so $u \rightarrow B$. Hence $\tau$ is an $S_{k'}$ $m$-set.
\end{proof}

The second bullet of Proposition \ref{prop:simpleResult} implies that it is easy to create $S_k$ sets of tournaments with a large number of vertices. So, it is more interesting to find $S_k$ sets of tournaments with a small number of vertices.

\begin{definition}
    For $m, k \in \mathbb{N}$, define $f(m,k)$ to be the minimum size of a vertex set $V$ such that there is an $S_k$ set of $m$ tournaments on $V$.
\end{definition}

Proposition \ref{prop:simpleResult} shows that $f(m,k)$ is decreasing in $m$ and increasing in $k$. This means that for fixed $k$, fewer vertices are needed to dominate every $k$ set as more tournaments are allowed. Theorem \ref{thrm:constructionBound} addresses how many fewer vertices are needed, but the following result shows that to obtain an $S_k$ set, no more than $k+1$ tournaments would ever be needed. Note in the following proof that $V/i$ is the set $V$ with the element $i$.

\begin{proposition}\label{prop:m=k+1}
Let $k \in \mathbb{Z}$. Then $f(k+1,k) = k+1$. Furthermore, if $m \geq k+1$, $f(m,k) = k+1$.
\end{proposition}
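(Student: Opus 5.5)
The plan is to prove a matching lower bound and upper bound, and then handle general $m \geq k+1$ using the monotonicity in Proposition \ref{prop:simpleResult}.

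\emph{Lower bound.} We show $f(m,k) \geq k+1$ for every $m$. In any $S_k$ set $\tau$, take a $k$-subset $U$ (so $|V(\tau)| \geq k$ is needed just for the property to be meaningful). By the property, some vertex $u$ dominates $U$ in some $T_i$. Since a vertex has no loop, $u$ cannot dominate itself, so $u \notin U$. Hence $V(\tau)$ contains at least $k+1$ vertices.

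\emph{Upper bound.} We construct an $S_k$ $(k+1)$-set on $V=\{0,1,\dots,k\}$. The only $k$-subsets of $V$ are the sets $V\setminus\{i\}$ for $i\in V$; there are exactly $k+1$ of them. For each $i$, let $T_i$ be any tournament on $V$ in which $i$ is a source, i.e.\ $i \to j$ for all $j \neq i$, with the remaining edges oriented arbitrarily (for instance, by the transitive order with $i$ placed first). Then for the $k$-set $V\setminus\{i\}$, the vertex $i$ dominates it in $T_i$. Thus $\tau=\{T_0,\dots,T_k\}$ is $S_k$, and $f(k+1,k)\le k+1$. We should check that the $T_i$ are distinct tournaments, since $\tau$ is a set. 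They are, because $T_i$ has $i$ as its unique source. This requires $k \geq 1$; the degenerate small cases can be noted separately.

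\emph{General $m \ge k+1$.} The first bullet of Proposition \ref{prop:simpleResult} turns the $(k+1)$-set above into an $S_k$ $m$-set of order $k+1$. This needs $m$ distinct tournaments on $k+1$ vertices, which is possible as long as $m \le 2^{\binom{k+1}{2}}$, an implicit constraint we should mention. Combined with the general lower bound, this gives $f(m,k)=k+1$. There is no real obstacle here. The only subtle points are justifying $u\notin U$ in the lower bound and ensuring distinctness and enough tournaments for the set formulation.
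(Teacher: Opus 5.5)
Your proof is correct and follows the paper's argument: the lower bound because a dominating vertex must lie outside the $k$-set, the upper bound via $k+1$ tournaments in which $T_i$ has $i$ as a source (so $i$ dominates $V\setminus\{i\}$), and the first bullet of Proposition~\ref{prop:simpleResult} for $m \ge k+1$. Your remarks on distinctness and the cap $m \le 2^{\binom{k+1}{2}}$ go beyond the paper, which implicitly treats $\tau$ as an indexed family that may contain repeats, and which glosses over the vacuous case $|V(\tau)|<k$ just as you do.
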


\begin{proof}
    Note that if $\tau$ is a set of tournaments such that $|V(\tau)| 
    \leq k$, then $\tau$ is not $S_k$, since there is no vertex outside of any $k$ set of vertices of $V(\tau)$, if such a set even possible. So, $f(m,k) \geq k+1$.

    Let $V = \{1, \dotsc, k+1\}$ and define $\tau = \{T_1, \dotsc, T_{k+1}\}$ on $V$ such that $i \rightarrow V/i$ in $T_i$ for all $1 \leq i \leq k+1$. Then every $k$ subset of $V$ is of the form $V/i$, which is dominated by vertex $i$ in $T_i$. Thus $\tau$ is an $S_k$ $(k+1)$-set and $f(k+1,k) \leq k+1$. Hence, $f(k+1,k) = k+1$.

    If $m \geq k+1$, by the first bullet of Proposition \ref{prop:simpleResult}, $f(m,k) \leq f(k+1,k) = k+1$, and so $f(m,k) = k+1$.
\end{proof}

\begin{figure}[htp!] \centering 
\includegraphics[scale=1.2]{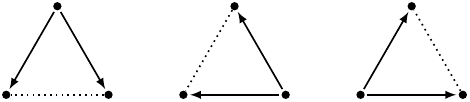}
\caption{Three $S_2$ tournaments achieving the value $f(3,2)=3$.}
\label{fig:f(3,2)}
\end{figure}

The construction in Proposition \ref{prop:heavyConstruction} does the heavy lifting in the proof of Theorem \ref{thrm:constructionBound}, which shows how drastically $f(m,k)$ decreases in $m$. Note that in the following proof, given a digraph $D$ and set of vertices $A$, $D[A]$ is the induced digraph on $A$. Also note that for an integer $n$, $[1,n]$ is the discrete interval $\{1, \dotsc , n\}$.

\begin{proposition}\label{prop:heavyConstruction}
    If there exists an $S_{k_1}$ $m_1$-set of order $n_1$ and an $S_{k_2}$ $m_2$-set of order $n_2$, then there exists an $S_{k_1+k_2+1}$ $(m_1 + m_2)$-set of order $n_1 + n_2$.
\end{proposition}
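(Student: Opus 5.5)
Let $\tau_1=\{T_1,\dotsc,T_{m_1}\}$ be an $S_{k_1}$ $m_1$-set on a vertex set $V_1$ with $|V_1|=n_1$. Let $\tau_2=\{R_1,\dotsc,R_{m_2}\}$ be an $S_{k_2}$ $m_2$-set on a vertex set $V_2$ with $|V_2|=n_2$, and take $V_1\cap V_2=\emptyset$. I will build $m_1+m_2$ tournaments on $V=V_1\cup V_2$, of the form $T_1',\dotsc,T_{m_1}',R_1',\dotsc,R_{m_2}'$. For $T_i'$ I set $T_i'[V_1]=T_i$, direct every edge between the parts from $V_1$ to $V_2$, and let $T_i'[V_2]$ be arbitrary. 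Symmetrically, $R_j'[V_2]=R_j$, every edge between the parts goes from $V_2$ to $V_1$, and $R_j'[V_1]$ is arbitrary. Each of these is a tournament on $V$ because every pair is directed exactly once. So the construction is immediate, and the whole proof is the verification of property $S_{k_1+k_2+1}$.

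Let $U\subseteq V$ with $|U|=k_1+k_2+1$. Write $U_1=U\cap V_1$ and $U_2=U\cap V_2$. Since $|U_1|+|U_2|=k_1+k_2+1$, pigeonhole gives $|U_1|\le k_1$ or $|U_2|\le k_2$; otherwise the sum would be at least $k_1+k_2+2$. By symmetry I treat the case $|U_1|\le k_1$.

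I want some $T_i$ and some $u\in V_1$ with $u\to U_1$ in $T_i$. Then in $T_i'$, the vertex $u$ dominates $U_1$ inside $V_1$, and it dominates all of $U_2$ because every edge goes from $V_1$ to $V_2$. Hence $u\to U$ in $T_i'$. To find such $u$, enlarge $U_1$ to a $k_1$-subset of $V_1$ by adding arbitrary vertices, exactly as in the second bullet of Proposition \ref{prop:simpleResult}, and then apply the $S_{k_1}$ property of $\tau_1$. The dominating vertex is not in the enlarged set, so it is not in $U_1$, as needed.

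The main obstacle is the edge case where the padding argument needs $n_1\ge k_1$ (indeed $n_1\ge k_1+1$). This always holds, since any $S_{k_1}$ set has order at least $k_1+1$, as shown in the proof of Proposition \ref{prop:m=k+1}. A smaller point is $U_1=\emptyset$: then any vertex of $V_1$ works in $T_1'$, and $V_1$ is nonempty. The case $|U_2|\le k_2$ is identical with the roles of $V_1$ and $V_2$ swapped, using the $R_j'$.
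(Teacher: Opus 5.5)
Your proof is correct and is essentially the paper's: the same block construction (cross edges $V_1\to V_2$ in the first $m_1$ tournaments, $V_2\to V_1$ in the last $m_2$), followed by the pigeonhole split $|U\cap V_1|\le k_1$ or $|U\cap V_2|\le k_2$. Your explicit padding step, justified by $n_1\ge k_1+1$, spells out the monotonicity in $k$ that the paper uses implicitly when it applies the $S_{k_1}$ property directly to a set of size at most $k_1$.
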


\begin{proof}
    Let $\tau_1 = \{T_{(1,1)}, \dotsc, T_{(1,m_1)}\}$ be an $S_{k_1}$ $m_1$-set of order $n_1$ on a vertex set $V_1$ and $\tau_2 = \{T_{(2,1)}, \dotsc, T_{(2,m_1)}\}$ be an $S_{k_2}$ $m_2$-set of order $n_2$ on a vertex set $V_2$ (with $V_1 \cap V_2 = \emptyset$). Define the set $\tau' = \{T'_{(1,1)}, \dotsc, T'_{(1,m_1)}, T'_{(2,1)}, \dotsc, T'_{(2,m_1)}\}$ of tournaments as follows:
    \begin{itemize}
        \item $V(\tau') = V' = V_1 \cup V_2$.
        \item For $i \in [1, m_1]$, $T'_{(1,i)}[V_1] = T_{(1,i)}$.
        \item For $i \in [1, m_1]$, if $v_1 \in V_1$ and $v_2 \in V_2$, then $(v_1,v_2) \in E(T'_{(1,i)})$.
        \item For $j \in [1, m_2]$, $T'_{(2,j)}[V_2] = T_{(2,j)}$.
        \item For $j \in [1, m_2]$, if $v_1 \in V_1$ and $v_2 \in V_2$, then $(v_2,v_1) \in E(T'_{(2,j)})$.
    \end{itemize}

    We show $\tau'$ is $S_{k_1 + k_2+1}$. Let $A \subseteq V_1 \cup V_2$ such that $|A| = k_1 + k_2+1$. Then since $V_1 \cap V_2 = \emptyset$, either $A \cap V_1 \leq k_1$ or $A \cap V_2 \leq k_2$.

    Suppose $A \cap V_1 \leq k_1$. Then, since $\tau_1$ is $S_{k_1}$, there exists $i \in [1,m_1]$ and $u \in V_1$ such that $u \rightarrow A \cap V_1$ in $T_{(1,i)}$. 
    Then since $T'_{(1,i)}[V_1] = T_{(1,i)}$, we have that $u \rightarrow A \cap V_1$ in $T'_{(1,i)}$. 
    Furthermore, since $u \in V_1$ and  $(u,v_2) \in E(T'_{(1,i)})$ for all $v_2 \in V_2$, we have that $u \rightarrow A \cap V_2$ in $T'_{(1,i)}$. 
    Hence $u \rightarrow A$ in $T'_{(1,i)}$.

    Similarly, if $A \cap V_2 \leq k_2$, since $\tau_2$ is $S_{k_2}$, there exists $j \in [1, m_2]$ and $u \in V_2$ such that $u \rightarrow A$ in $T'_{(2,j)}$.
    Thus, any $k_1 + k_2+1$ set of $V(\tau')$ is dominated in some tournament of $\tau'$, and so $\tau'$ is an $S_{k_1+k_2+1}$ $(m_1+m_2)$ set of order $n_1 + n_2$.
\end{proof}

\begin{example}\label{example:f(m,k)}
    Figures \ref{fig:f(2,2)} and \ref{fig:f(3,5)} show sets coming from the construction in Proposition \ref{prop:heavyConstruction}. Note that the dashed edges in these tournaments can be directed arbitrarily, and the bold edges show smaller $S_k$ tournaments within the structure.

    Note that the set in Figure \ref{fig:f(3,5)} is formed by iterating the construction twice. The first iteration combines an $S_1$ tournament on vertices $v_1, v_2$, and $v_3$ with an $S_1$ tournament on vertices $v_4$, $v_5$, and $v_6$. The next combines the resulting $S_3$ set of $2$ tournaments with an $S_1$ tournament on vertices $v_7$, $v_8$, and $v_9$. 
\end{example}

\begin{figure}[htp!] \centering 
\includegraphics[scale=1]{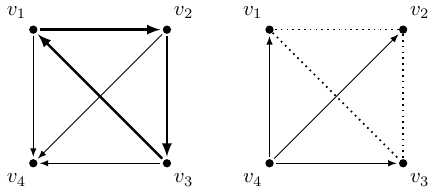}
\caption{Two $S_2$ tournaments achieving the value $f(2,2) = 4$.}
\label{fig:f(2,2)}
\end{figure}


\begin{figure}[H] \centering 
\includegraphics[scale=1]{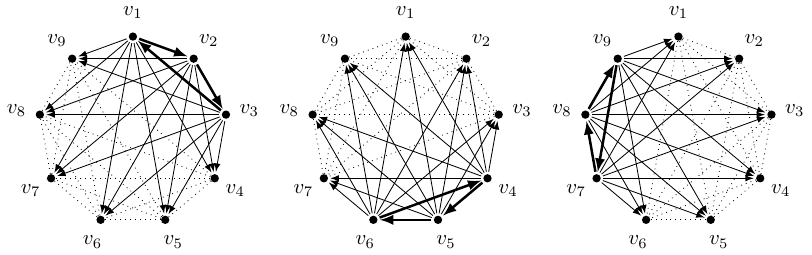}
\caption{Three $S_5$ tournaments achieving the value $f(3,5) = 9$.}
\label{fig:f(3,5)}
\end{figure}

We can use the construction from Proposition \ref{prop:heavyConstruction} to bound the value of $f(m,k)$ recursively. In particular, this allows us to bound $f(m,k)$ by values of $f(k)$.

\begin{theorem}\label{thrm:constructionBound}
For any $m_1 + m_2 = m$ and $k_1 + k_2 = k-1$, $f(m,k) \leq f(m_1,k_1) + f(m_2,k_2)$. 

Additionally, if $k+1 = am + b$ with $1 \leq b \leq m$, then $f(m,k) \leq bf(a) + (m-b)f(a-1)$.
\end{theorem}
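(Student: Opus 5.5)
The first inequality follows directly from Proposition \ref{prop:heavyConstruction}. Let $m_1+m_2=m$ and $k_1+k_2=k-1$. By definition of $f(m_i,k_i)$ there is an $S_{k_i}$ $m_i$-set of order $f(m_i,k_i)$ for $i=1,2$. Proposition \ref{prop:heavyConstruction} combines them into an $S_{k_1+k_2+1}=S_k$ $(m_1+m_2)$-set of order $f(m_1,k_1)+f(m_2,k_2)$, so $f(m,k)\le f(m_1,k_1)+f(m_2,k_2)$.

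For the second bound, the plan is to iterate this with every piece a single tournament. Note that $f(1,j)=f(j)$, since an $S_j$ $1$-set is exactly an $S_j$ tournament. Extending the first inequality by induction on the number of pieces gives the following: if $m=\sum_{i=1}^m 1$ and $k_1,\dots,k_m\ge 0$ satisfy $\sum_i (k_i+1)=k+1$, then
\begin{equation*}
f(m,k)\le \sum_{i=1}^m f(k_i).
\end{equation*}
The inductive step applies the first part with $m_1=m-1$, $m_2=1$, $k_1=\sum_{i<m}(k_i+1)-1$ and $k_2=k_m$. Then $k_1+k_2=k-1$, as required.

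Now write $k+1=am+b$ with $1\le b\le m$. Take $b$ of the pieces with $k_i+1=a+1$, i.e. $k_i=a$, and the other $m-b$ pieces with $k_i+1=a$, i.e. $k_i=a-1$. Then $\sum_i(k_i+1)=b(a+1)+(m-b)a=am+b=k+1$, and the bound becomes $bf(a)+(m-b)f(a-1)$.

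The main obstacle is the edge case $a=0$. This happens when $k+1\le m$, and then $k_i=a-1=-1$ is not meaningful. Two conventions handle it. One is to set $f(-1)=0$ and interpret a piece with "$k=-1$" as an empty vertex set; Proposition \ref{prop:heavyConstruction} still goes through, since a piece with $|A\cap V_i|\le -1$ never occurs. The other is to treat this range directly: here $b=k+1$, and the bound reads $(k+1)f(0)$, which is at least $k+1=f(m,k)$ by Proposition \ref{prop:m=k+1}. I would state one of these conventions explicitly. I would also check that $f(0)=1$, since a single vertex dominates the empty set, and confirm the formula against Example \ref{example:f(m,k)}. There, $k=5$ and $m=3$ give $a=2$ and $b=0$, which is not allowed; rewriting as $k+1=1\cdot 3+3$ gives $a=1$, $b=3$, and the bound $3f(1)=9$, which matches.
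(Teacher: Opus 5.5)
Your proof is correct, and your first inequality is exactly the paper's argument. For the second bound the organization differs. The paper inducts on $m$ directly on the closed form. It always splits off a single tournament with parameter $k_1=a-1$, applies the inductive hypothesis to $f(m-1,a(m-1)+b-1)$, and asserts that this is at most $bf(a)+(m-1-b)f(a-1)$. You instead first prove the stronger statement that $f(m,k)\le\sum_i f(k_i)$ for every choice of $k_1,\dots,k_m\ge 0$ with $\sum_i(k_i+1)=k+1$, and only then specialize to the balanced choice.

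Your route buys more than generality. The paper's inductive step is valid only for $b\le m-1$. When $b=m$ one has $a(m-1)+b=(a+1)(m-1)+1$, so the inductive hypothesis yields $f(a+1)+(m-2)f(a)$, not $mf(a)-f(a-1)$. The coefficient $m-1-b=-1$ in the paper's display is the symptom. For instance, with $m=2$ and $k=3$ the paper's split gives only $f(1,0)+f(1,2)=1+7=8$. The claimed bound $2f(1)=6$ needs the split $f(1,1)+f(1,1)$ instead. Because your lemma lets you choose the parts freely, every $1\le b\le m$ is covered uniformly. Your explicit handling of the degenerate case $a=0$, where the paper's step would involve $f(1,-1)$, is also something the paper leaves unaddressed.
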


\begin{proof}
    The first bound comes directly from the construction from Proposition \ref{prop:heavyConstruction}. We prove the second bound by induction on $m$.

    If $m=1$, then $a = k+1$ and $b = 0$, and so the bound reads $f(1,k)\leq f((k+1)-1) = f(k)$ which is true by definition.
    Assume the bound holds for all $m' \leq m$. Then since $k+1 = am+b$ implies $(a-1) + (a(m-1)+b-1) = k-1$, using the bound above where $m_1 = 1$, $m_2 = m-1$, $k_1 = a-1$, and $k_2 = a(m-1) + b-1$, we have
    \begin{align*}
        f(m,k) &\leq f(1,a-1) + f(m-1, a(m-1)+b-1)\\
        &\leq f(a-1) + bf(a) + (m-1-b)f(a-1)\\
        &= bf(a) + (m-b)f(a-1)
    \end{align*}
    
\noindent where the first line comes from the first bound in the theorem and the second line comes from the inductive assumption.
\end{proof}
Using this result and the bound provided by Erd\H os in Theorem \ref{thrm:Erdos}, we get the following corollary. This shows that for $m > 1$, $f(m,k)$ grows considerably slower than $f(k)$ with respect to $k$. Note that one could follow a similar probabilistic argument as Erd\H{o}s used in Theorem \ref{thrm:Erdos} to obtain a bound for $f(m,k)$, but this will give a larger bound than the one shown here.

\begin{theorem}
For $m, k \in \mathbb{Z}$, $f(m,k) \leq m f\!\left(\left\lceil \frac{k-m+1}{m} \right\rceil \right)$.
In particular, $f(m,k) \leq (\log(2) + O(1)) \frac{k^2}{m} 2^{k/m}$ for fixed $m$ and large $k$.
\end{theorem}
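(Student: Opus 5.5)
Write $k+1 = am+b$ with $1 \leq b \leq m$, exactly as in Theorem \ref{thrm:constructionBound}. The plan is to deduce the first inequality from the second bound of that theorem. Then Erd\H{o}s' upper bound in Theorem \ref{thrm:Erdos} gives the asymptotic statement.

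\textbf{Step 1: reduce to a single value of $f$.} By Theorem \ref{thrm:constructionBound},
\[
f(m,k) \leq b f(a) + (m-b) f(a-1).
\]
Since any $S_{a}$ tournament is also $S_{a-1}$ (the second bullet of Proposition \ref{prop:simpleResult} with $m=1$, or just add a vertex to a set), $f$ is nondecreasing. Hence $f(a-1) \leq f(a)$, and so $f(m,k) \leq m f(a)$.

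\textbf{Step 2: identify $a$.} We need $a = \lceil (k-m+1)/m \rceil$. From $k+1 = am+b$ we get $k-m+1 = (a-1)m + b$ with $1 \leq b \leq m$. Therefore $(k-m+1)/m \in (a-1, a]$, and its ceiling is exactly $a$. This is the one spot needing care: the convention $1 \leq b \leq m$, rather than $0 \leq b < m$, is precisely what makes the ceiling land on $a$ and not $a+1$. Edge cases with small $k$, where $a \leq 0$, should be excluded or handled by declaring $f(0)$ suitably. For example, the statement is only meaningful for $k \geq m$, and otherwise Proposition \ref{prop:m=k+1} applies.

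\textbf{Step 3: asymptotics.} Fix $m$ and let $k \to \infty$. Then $a \leq (k+1)/m$, and Theorem \ref{thrm:Erdos} gives
\[
f(a) \leq (\log 2 + o(1))\, a^2 2^{a}.
\]
Multiplying by $m$ yields
\[
m f(a) \leq (\log 2 + o(1))\, \frac{(k+1)^2}{m}\, 2^{(k+1)/m}.
\]
The factor $2^{1/m}$ and the ratio $(k+1)^2/k^2$ are bounded constants for fixed $m$. They can be absorbed into the error term only if it is read as $O(1)$ multiplicatively, as in the paper's own phrasing of Theorem \ref{thrm:Erdos}. The bookkeeping of exactly which constant is absorbed is the main point to state carefully. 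If one wants it sharper, use $a = \lceil (k-m+1)/m\rceil \leq k/m + 1 - 1/m + \ldots$, which gives $2^{a} \leq 2^{k/m}\cdot 2^{(m-1)/m}$, again a constant factor.
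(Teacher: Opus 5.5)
Your proof is correct and follows the paper's route: the second bound of Theorem~\ref{thrm:constructionBound}, then monotonicity of $f$, then Erd\H{o}s' bound. Your convention $1 \le b \le m$ is in fact the right one, because the paper's proof uses $0 \le b < m$, and under that convention its step $a \le \lceil (k-m+1)/m \rceil$ fails whenever $m \mid k+1$ (e.g.\ $m=2$, $k=3$). In Step 3 no constant needs to be absorbed, since $b \ge 1$ gives $am = k+1-b \le k$ and hence $a \le k/m$ (as the paper notes), which removes the factor $2^{1/m}$ entirely; your closing ``sharper'' aside actually goes the wrong way and produces the larger factor $2^{(m-1)/m}$.
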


\begin{proof}
    Let $k+1 = am+b$ with $0 \leq b < m$. Then by Theorem \ref{thrm:constructionBound} (and since $f(k)$ is increasing in $k$),
    \begin{align*}
    f(m,k) &\leq bf(a) + (m-b)f(a-1) \\
    &\leq b f(a) + (m-b) f(a) \\
    &= m f(a).
    \end{align*}
    Then since $b < m$, we have that $a = \frac{k+1-b}{m} \leq \left\lceil \frac{k-m+1}{m} \right\rceil$, and so $f(m,k) \leq  m f\!\left(\left\lceil \frac{k-m+1}{m} \right\rceil \right)$.

    Let $k' = \left\lceil \frac{k-m+1}{m} \right\rceil$ and note that $k' \leq \frac{k}{m}$. Then by Theorem \ref{thrm:Erdos},

    \begin{align*}
        f(m,k) &\leq m f(k')\\
        &\leq m \left((\log(2) + O(1)) {k'}^2 2^{k'} \right) \text{ for large } k \\
        &\leq m (\log(2) + O(1)) {\left(\frac{k}{m}\right)}^2
2^{k/m}\\
        &= (\log(2) + O(1)) \frac{k^2}{m} 2^{k/m}.
\end{align*}
\end{proof}

The bounds from Theorem \ref{thrm:constructionBound} are pretty good for small $m$ and $k$. Table \ref{table:smallf(m,k)} shows the upper bounds of $f(m,k)$ for small $m$ and $k$ calculated from that theorem using the known values of $f(k)$. Note that the first column shows the upper bounds for $f(k)$ achieved by Paley graphs mentioned in the introduction. The starred values have been confirmed to be exact values for $f(m,k)$ using a SAT formulation of the problem discussed further in \cite{jeffries2024generalizations}. 

\begin{figure}[H]
\begin{center}
\begin{tabular}{|l|l|l|l|l|l|}
\hline
k\textbackslash{}m & 1 & 2 & 3 & 4 & 5 \\ \hline
0 & 1* &  &  &  &  \\ \hline
1 & 3* & 2* &  &  &  \\ \hline
2 & 7* & 4* & 3* &  &  \\ \hline
3 & 19* & 6* & 5* & 4* &  \\ \hline
4 & 67 & 10* & 7* & 6* & 5* \\ \hline
5 & 331 & 14 & 9* & 8* & 7* \\ \hline
6 & 1163 & 26 & 13 & 10 & 9* \\ \hline
7 &  & 38 & 17 & 12 & 11 \\ \hline
8 &  & 86 & 21 & 16 & 13 \\ \hline
\end{tabular}
\end{center}
\caption{Known upper bounds of $f(m,k)$ for small values of $m$ and $k$. Starred values show confirmed equality.}
\label{table:smallf(m,k)}
\end{figure}

For all values that have been checked by computer, the bounds given by Theorem \ref{thrm:constructionBound} and known values for $f(k)$ are tight. However, it would be rash to conjecture that this equality continues for larger $m$ and $k$. The tournaments constructed in Proposition \ref{prop:heavyConstruction} have much freedom left in arbitrarily directed edges, like those shown as dashed lines  in Figure \ref{fig:f(3,5)}.

\section{Dice games}\label{sec:diceGamesII}



As explained in the introduction, Oskar's dice can be used to play a 3-player game where, no matter which two dice the first two players pick, the last player can choose a die that expected to beat each. This advantage is reliant on the fact that the tournament generated by Oskar's dice (shown in Figure \ref{fig:oskarDice}) is $S_2$.

This game can be extended to an arbitrary number of players while maintaining the last player's advantage. Erd\H os showed that tournaments with Sch\"{u}tte's property exist, and it is known that any tournament can be ``realized" by some set of dice, possibly with a large number of faces (\cite{mcgarvey1953theorem}, \cite{BednayBozoki}, \cite{AngelDavis}). That is, for any tournament, there is a set of dice that can be identified with the vertices of the tournament such that if vertex $v$ is directed to vertex $u$, then the die identified with $v$ is expected to roll higher than the die identified with $u$. 

Using these two ideas, one could build a set of dice to realize an $S_k$ tournament, then have $k$ opponents choose dice from this set. Since the tournament is $S_k$, there must be a die the last player can choose that is expected to beat those $k$ dice. Unfortunately, since the number of vertices needed to have an $S_k$ tournament grows rapidly, at least on the order $f(k) > (k+2)2^{k-1}-1$, so does the number of dice needed to play such a game. For instance, $f(5) \geq 111$, so one would need to carry around at least that many dice to play the game against $5$ other opponents.

Instead, we can use the extension of Sch\"{u}tte's property to sets of tournaments to reduce the number of dice needed to play these games. For example, recall that Grime's dice (shown in Figure \ref{fig:grimeDice}) can be used to play a similar game where the last player selects a die \emph{and} decided whether to roll once or twice. This rule set gives the last player an advantage precisely because the set of two tournaments in Figure \ref{fig:grimeDice} is $S_2$. 

As shown in the previous section, $S_k$ $m$-sets of tournaments exist for any $k$, and the order of these tournaments decrease as $m$ increases. If dice could be constructed to realize a given set of $S_k$ tournaments in some way, for instance with multiple rolls, those dice could be used to play a game similar to the one played with Grime's dice.

For example, suppose $\tau = \{T_r \}$ is an $S_k$ set of $m$ tournaments, and further suppose there existed a set of dice $\{D_i\}$ that could be identified with the vertices of $\tau$ such  that if $(i,j)$ is an edge in $T_r$, then $D_i$ is expected to beat $D_j$ when each die is rolled $r$ times and the results are summed. If $k$ players select dice from this set, since $\tau$ is $S_k$, there is a die $D_i$ and number of rolls $r$, outcomes represented in $T_r$, such that when rolled $r$ times, $D_i$ is expected to roll higher than all dice selected by the first $k$ players.

Buhler, Graham, and Hales \cite{Graham_dice_construction} showed that for all $n$, there exists a set of ``maximally nontransitive" $3$-sided dice $\mathcal{A}$ such that for every tournament $T$ on $n$ vertices, there are infinitely many $r$ such that $T$ is realized by rolling the set of dice $r$ times and comparing the summed results.
This result gives us a way to realize sets of tournaments using these dice, but the number of rolls needed to realize all tournaments is asymptotic to $n$. So, if a player wants to realize a particular tournament using the set of maximally nontransitive dice described in \cite{Graham_dice_construction}, they may need to roll quite a few times. 

In \cite{jeffries2024generalizations}, this author showed that sets of dice exist to realize any two tournament $T_1$ and $T_2$, such that $T_1$ shows which die is expected to win when rolled once and $T_2$ shows which die is expected to win when rolled twice and summed.
The ability to realize a general set of $m$ tournaments in the first $m$ rolls seems fairly difficult. We turn our attention to a few small examples of dice that realize $S_k$ $m$-sets of tournaments and the games they can be used to play.

The tournaments in Figure \ref{fig:f(2,2)} are an $S_2$ $2$-set of order $4$, the minimum possible order for such a set since $f(2,2) = 4$. Figure \ref{fig:f(2,2) dice} shows a set of dice that realize those two tournaments. Each arrow represents a die winning with a probability of $\frac{5}{9}$ or larger. This set of dice has the interesting property that the winner between any pair of dice switches when the dice are rolled twice. Since this set of $2$ tournaments is $S_2$, just like the tournaments generated by Grime's dice, the game played with Grime dice can also be played with this set of dice, but with fewer dice.

\begin{figure}[H] \centering 
\includegraphics[scale=1]{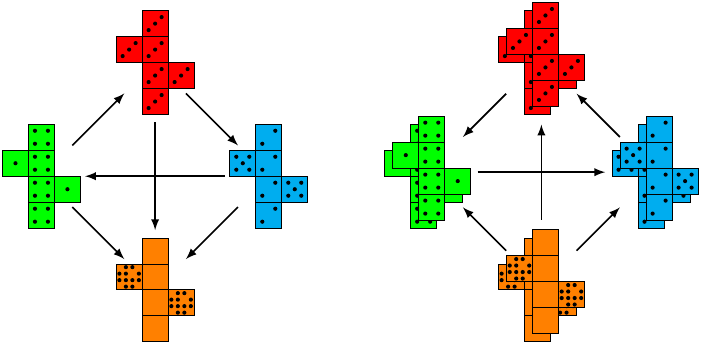}
\caption{Dice that realize the $S_2$ $2$-set of tournaments shown in Figure \ref{fig:f(2,2)}.}
\label{fig:f(2,2) dice}
\end{figure}

Figure \ref{fig:f(3,2) dice} shows another interesting set of dice. These dice have the property that when rolled once, the red die is expected to win, when rolled twice, the blue die is expected to win, and when rolled thrice, the green die is expected to win. Each bold arrow shows a winning probability of at least $\frac{7}{12}$. In a 3 player game, when 2 players select dice, the last player is forced to take the remaining die, and so they can justify getting to select how many times the dice are rolled. Since this set of tournaments is again $S_2$, the third player maintains an advantage.

The set $\{\{10, 10, 10\}, \{0, 0, 30\}, \{7, 7, 19\}, \{9, 9, 14\}, \{3, 3, 26\}\}$ is a similar set of dice where the $r^\text{th}$ die in this list is expected to roll higher than all others when rolled $r$ times and the results are summed (though some win with very low probability, the lowest probability around 0.51). The set of tournaments the outcomes these dice generate is $S_4$, so these these dice can be used to play a similar 5 player game.

\begin{figure}[H] \centering 
\includegraphics[scale=1]{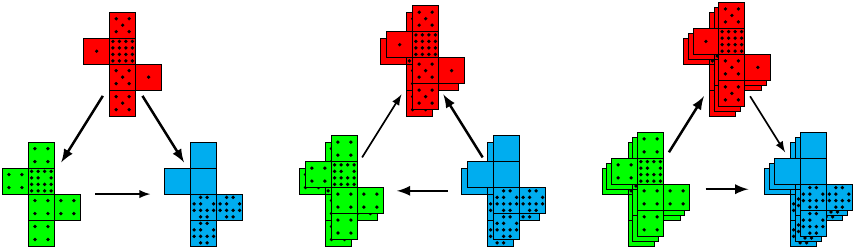}
\caption{Dice that realize a $f(3,2)$ tournament.}
\label{fig:f(3,2) dice}
\end{figure}

\section{Conclusions}\label{sec:conclusion}

This work extends Sch\"{u}tte's property to sets of tournaments motivated by dice games, but this property could could be applied to a variety situations, such as network monitoring or multi-issue voting.

There are several open questions that could extend this work. The bounds on $f(k)$ have remained un-improved for 60 years. It is likely that modern extremal techniques could be used to improve these bounds. Any improvements on the upper bound for $f(k)$ would improve the bounds for $f(m,k)$ introduced in this work. It is also likely that the bound for $f(m,k)$ given in Theorem \ref{thrm:constructionBound} could be improved, as the construction that gives this bound leaves lots of room to direct additional edges. This work also provides no non-trivial lower bound for $f(m,k)$.

On the topic of dice, the result of Buhler, Graham, and Hales \cite{Graham_dice_construction} shows that a set of dice exists to realize a set of dice with multiple rolls if one does not care about how many rolls are needed. However, it would be nice to be able to construct dice to realize a set of $m$ tournaments within the first $m$ rolls. It is possible that the classes of $S_k$ sets of tournaments generated by the construction in Proposition \ref{prop:heavyConstruction} might be easier to realize than general sets of tournaments. The sets of tournaments that come out of this construction have a particularly nice structure, where only local properties are needed in each tournament to ensure the set is $S_k$. This structure could be exploited to construct dice that realize these tournaments, which could then be used to play the unfair games described above.

\bibliographystyle{plain}
\bibliography{bibliography}

@article{erdos,
 ISSN = {00255572},
 URL = {http://www.jstor.org/stable/3613396},
 author = {P. Erdös},
 journal = {The Mathematical Gazette},
 number = {361},
 pages = {220--223},
 publisher = {Mathematical Association},
 title = {On a Problem in Graph Theory},
 urldate = {2023-01-30},
 volume = {47},
 year = {1963}
}

@article{bipartiteSchutte, 
title={Schütte's Tournament Problem and Intersecting Families of Sets}, 
volume={12}, 
DOI={10.1017/S0963548303005674}, 
number={4}, 
journal={Combinatorics, Probability and Computing}, 
publisher={Cambridge University Press}, 
author={M. Borowiecki, J. Grytczuk and M. Haluszczak and Z. TUZA}, 
year={2003}, 
pages={359–364}
}

@article{dominationirredundance,
author = {Reid, Kenneth and McRae, A.A. and Hedetniemi, S.M. and Hedetniemi, Stephen},
year = {2004},
month = {01},
pages = {},
title = {Domination and irredundance in tournaments},
volume = {29},
journal = {The Australasian Journal of Combinatorics [electronic only]}
}

@article{boundeddomination,
title = {On the bounded domination number of tournaments},
journal = {Discrete Mathematics},
volume = {220},
number = {1},
pages = {257-261},
year = {2000},
issn = {0012-365X},
doi = {https://doi.org/10.1016/S0012-365X(00)00029-7},
url = {https://www.sciencedirect.com/science/article/pii/S0012365X00000297},
author = {Xiaoyun Lu and Da-Wei Wang and C.K. Wong}
}

@misc{teaching,
  doi = {10.48550/ARXIV.2205.08357},
  url = {https://arxiv.org/abs/2205.08357},
  author = {Simon, Hans Ulrich},
  title = {Minimum Tournaments with the Strong $S_k$-Property and Implications for Teaching},
  publisher = {arXiv},
  year = {2022},
  copyright = {Creative Commons Attribution 4.0 International}
}

@article{2Szekeres,
 ISSN = {00255572},
 URL = {http://www.jstor.org/stable/3612854},
 author = {E. Szekeres and G. Szekeres},
 journal = {The Mathematical Gazette},
 number = {369},
 pages = {290--293},
 publisher = {Mathematical Association},
 title = {On a Problem of Schütte and Erdös},
 urldate = {2023-01-30},
 volume = {49},
 year = {1965}
}

@article{graham_spencer_construction, 
title={A Constructive Solution to a Tournament Problem}, 
volume={14}, 
DOI={10.4153/CMB-1971-007-1}, 
number={1}, 
journal={Canadian Mathematical Bulletin}, 
publisher={Cambridge University Press}, 
author={Graham, R. L. and Spencer, J. H.}, 
year={1971}, 
pages={45–48}
}

@phdthesis{voting,
  title={CONDORCET WINNING SETS},
  author={Von Stengel, B.},
  year={2018},
  school={London School of Economics and Political Science}
}

@article{tyszkiewiczsimple,
  title={A simple construction for tournaments with every k players beaten by a single player},
  author={Tyszkiewicz, Jerzy},
  journal={The American Mathematical Monthly},
  volume={107},
  number={1},
  pages={53},
  year={2000},
  publisher={Taylor \& Francis Ltd.}
}

@article{bozoki,
author = {Bozóki, Sándor},
year = {2014},
month = {01},
pages = {39-50},
title = {Nontransitive dice sets realizing the Paley tournaments for solving Schütte's tournament problem},
volume = {15},
journal = {Miskolc Mathematical Notes},
doi = {10.18514/MMN.2014.659}
}

@article{grime,
author = {James Grime},
title = {The Bizarre World of Nontransitive Dice: Games for Two or More Players},
journal = {The College Mathematics Journal},
volume = {48},
number = {1},
pages = {2-9},
year  = {2017},
publisher = {Taylor & Francis},
doi = {10.4169/college.math.j.48.1.2},
URL = {https://doi.org/10.4169/college.math.j.48.1.2},
eprint ={https://doi.org/10.4169/college.math.j.48.1.2}
}

@article{Graham_dice_construction,
author = {Joe Buhler and Ron Graham and Al Hales},
title = {Maximally Nontransitive Dice},
journal = {The American Mathematical Monthly},
volume = {125},
number = {5},
pages = {387-399},
year  = {2018},
publisher = {Taylor & Francis},
doi = {10.1080/00029890.2018.1427392},

URL = { 
        https://doi.org/10.1080/00029890.2018.1427392
},
eprint = { 
        https://doi.org/10.1080/00029890.2018.1427392
}
}

@article{AngelDavis,
author = {Angel, Levi and Davis, Matt},
title = {A Direct Construction of Nontransitive Dice Sets},
journal = {Journal of Combinatorial Designs},
volume = {25},
number = {11},
pages = {523-529},
doi = {https://doi.org/10.1002/jcd.21563},
url = {https://onlinelibrary.wiley.com/doi/abs/10.1002/jcd.21563},
eprint = {https://onlinelibrary.wiley.com/doi/pdf/10.1002/jcd.21563},
year = {2017}
}

@inproceedings{BednayBozoki,
          author = {D Bednay and S{\'a}ndor Boz{\'o}ki},
       booktitle = {Proceedings of the 8th Japanese-Hungarian Symposium on Discrete Mathematics and Its Applications},
          editor = {Andr{\'a}s Frank and Andr{\'a}s Recski and G{\'a}bor Wiener},
         address = {Budapest},
           title = {Constructions for nontransitive dice sets},
       publisher = {BME VIK Sz{\'a}m{\'i}t{\'a}studom{\'a}nyi {\'e}s Inform{\'a}ci{\'o}elm{\'e}leti Tansz{\'e}k},
           pages = {15--23},
            year = {2013},
             url = {https://eprints.sztaki.hu/7623/}
}

@phdthesis{jeffries2024generalizations,
  title={Generalizations of Sch{\"u}tte’s property with applications to dice games},
  type = {Doctoral Dissertation},
  author={Jeffries, Joel},
  year={2024},
  school={Iowa State University}
}

@article{mcgarvey1953theorem,
  title={A theorem on the construction of voting paradoxes},
  author={McGarvey, David C},
  journal={Econometrica: Journal of the Econometric Society},
  pages={608--610},
  year={1953},
  publisher={JSTOR}
}
\end{document}